\colorlet{refkey}{orange!20}
\colorlet{labelkey}{blue!30}
\newtheorem{theorem}{Theorem}
\newtheorem{lemma}[theorem]{Lemma}
\newtheorem{conjecture}[theorem]{Conjecture}
\newtheorem*{question*}{Question}
\theoremstyle{definition}
\newtheorem*{definition*}{Definition}
\theoremstyle{remark}
\newtheorem*{remark}{Remark}
\newcommand{\abs}[1]{\left\lvert#1\right\rvert}
\newcommand{\norm}[1]{\left\lVert#1\right\rVert}
\newcommand{\RR}{\mathbb{R}}
\title[Counterexample to the Bollob\'as--Riordan conjectures]
{A counterexample to the Bollob\'as--Riordan conjectures \\ on sparse graph limits}
\author[Sah]{Ashwin Sah}
\author[Sawhney]{Mehtaab Sawhney}
\author[Tidor]{Jonathan Tidor}
\author[Zhao]{Yufei Zhao}
\address{Massachusetts Institute of Technology, Cambridge, MA 02139, USA}
\email{\{asah,msawhney,jtidor,yufeiz\}@mit.edu}
\date{}
\thanks{
Tidor was supported by NSF Graduate Research Fellowship Program DGE-1122374. 
Zhao was supported by NSF Award DMS-1764176, the MIT Solomon Buchsbaum Fund, and a Sloan Research Fellowship.
}
\begin{document}

\begin{abstract}
Bollob\'as and Riordan, in their paper ``Metrics for sparse graphs,'' proposed a number of provocative conjectures extending central results of quasirandom graphs and graph limits to sparse graphs.
We refute these conjectures by exhibiting a sequence of graphs with convergent normalized subgraph densities (and pseudorandom $C_4$-counts), but with no limit expressible as a kernel.
\end{abstract}

\maketitle
The study of pseudorandom and quasirandom graphs, initiated by Thomason~\cite{Tho87,Tho87a} and Chung, Graham, and Wilson~\cite{CGW89}, plays a central role in graph theory. 
A particularly nice aspect of this theory is that many notions of quasirandomness are equivalent for dense graph sequences.
The theory of graph limits~\cite{Lov12}, developed by Lov\'asz and collaborators, further generalizes these concepts.
Some of the central results of these theories are summarized below. 
Consider a sequence of graphs $G_n$ whose number of vertices goes to infinity with $n$.
We write $|G|$ and $e_G$ respectively for the number of vertices and edges of $G$, 
and $t(F, G) = \hom(F, G) |G|^{-|F|}$ for the homomorphism density of $F$ in $G$.
\begin{enumerate}
    \item \emph{$C_4$ counts control quasirandomness \cite{CGW89}.}  If $t(K_2, G_n) \to p$ and $t(C_4, G_n) \to p^4$ for some constant $p$, then $t(F, G_n) \to p^{e_F}$ for all graphs $F$, 
    	and furthermore $G_n$ converges to $p$ in the cut norm (i.e., satisfies a discrepancy condition).
    \item \emph{Existence of graph limits \cite{CGW89}.} If $t(F, G_n)$ converges as $n \to \infty$ for every $F$, then there exists a graphon $W \colon [0,1]^2 \to [0,1]$ such that $t(F, G_n) \to t(F, W)$.
    \item \emph{Equivalence of convergence \cite{BCLSV08}.}  $t(F, G_n)$ converges as $n \to \infty$ for every $F$ if and only if $G_n$ is a Cauchy sequence with respect to the cut metric.
\end{enumerate}

Implications concerning subgraph densities often fail for naive generalizations to sparse graphs. Here we call a sequence of graphs $G_n$ \emph{sparse} if $e_{G_n} / |G_n|^2 \to 0$ as $n \to \infty$. We normalize all the quantities considered according to the decaying edge-density.

There is much interest in extending the above ideas to sparse graphs. 
The first such systematic study was undertaken by Bollob\'as and Riordan~\cite{BR09}.
They considered natural notions of convergence and metrics for sparse graphs,
	and gave many interesting results and examples,
	as well as a long list of provocative conjectures. 
A recurring theme in their paper, as well as in other works in this area,
	is that one quickly runs into difficulties as soon as subgraph counts are involved. 
The lack of a general purpose ``counting lemma'' in sparse graphs appears to be a fundamental difficulty.
This issue lies at the heart of the sparse regularity method of Conlon, Fox, and Zhao~\cite{CFZ14,CFZ14a,CFZ15}, who developed novel counting lemmas in sparse graphs and hypergraphs under additional pseudorandomnesses hypotheses, building on and simplifying the Green--Tao theorem on arithmetic progressions in the primes~\cite{GT08}.
Some of the subsequent extensions of the Bollob\'as--Riordan sparse graph limit theory, in particular the $L^p$ theory of sparse graph limits~\cite{BCCZ18,BCCZ19}, largely avoids the issues of subgraph counts in favor of other metrics.

Given real $p>0$ and graphs $F$ and $G$, we define the \emph{normalized $F$-density} in $G$ to be
\[
 t_p(F,G) = \frac{\hom(F,G)}{p^{e_F}|G|^{|F|}}.
\]
Here we will primarily be concerned with $N$-vertex graphs with edge density $p = N^{-o(1)}$, 
	so that there is only a lower order difference between homomorphism counts and subgraph counts 
	(after accounting for automorphisms of $H$).
The normalization in $t_p(F, G)$ is chosen so that for a sequence of random graphs $G_n = G(n,p)$, 
	one has $t_p(F, G_n) \to 1$ for all $F$ almost surely.

A \emph{kernel} is a symmetric measurable function $W \colon [0,1]^2 \to [0,\infty)$, where symmetric means that $W(x,y) = W(y,x)$. 
(The word \emph{graphon} is often used in the literature for kernels with $[0,1]$-values.)
We say that a kernel is \emph{bounded} if there is some real $C$ so that $0 \le W \le C$ holds pointwise.
Given a graph $H$, we define the \emph{$H$-density} of a kernel $W$ to be
\[
t(H, W) = \int_{[0,1]^{V(H)}} \prod_{uv \in E(H)} W(x_u, x_v) \, \prod_{v \in V(H)} dx_v.
\]

Bollob\'as and Riordan~\cite{BR09} proposed the following conjectures. 
Throughout, let $G_n$ be a sequence of graphs 
	with edge-density $p_n = 2e_{G_n}/|G_n|^2$ 
	satisfying $p_n = |G_n|^{-o(1)}$. 
For a graph $F$, write 
\[
c_F = \lim_{n \to \infty} t_{p_n}(F, G_n).
\]
\begin{itemize}
    \item \cite[Conjecture~3.4]{BR09} \label{conj:3.4}
    If $c_F$ exists and is finite for all graphs $F$, then there is some kernel $W$ such that $t(F, W) = c_F$ for all graphs $F$.
    \item \cite[Conjecture~3.3]{BR09} \label{conj:3.3}
    If $c_F$ exists for all graphs $F$ and $\sup_F c_F^{1/e_F} < \infty$, then there is a bounded kernel $W$ such that $t(F, W) = c_F$ for all graphs $F$.
    \item \cite[Conjecture~3.21]{BR09} \label{conj:3.21}
    If $c_F$ exists and is finite for all graphs $F$ and $c_{C_4} = 1$, then $c_{K_3} = 1$.
    \item \cite[Conjecture~3.9]{BR09} \label{conj:3.9}
    If $c_F$ exists and is finite for all graphs $F$ and $c_{C_4} = 1$, then $c_F = 1$ for all graphs $F$.
\end{itemize}

There are additional conjectures in \cite{BR09} that we do not state here precisely since they require additional definitions. 
In particular, Conjecture~3.22 concerns graphs of sparser densities and would imply Conjecture~3.21.
Conjecture~5.5 would imply Conjecture~3.3. 
Conjectures~5.6 and 5.7 propose equivalences between convergence of subgraph densities and convergence in cut metric, and they would imply Conjecture~5.5.

We provide a single counterexample that refutes all conjectures in \cite{BR09}.

\begin{theorem}
There exists a sequence of graphs $G_n$ with $|G_n| \to \infty$ and edge density $p_n = |G_n|^{-o(1)}$
such that for every graph $F$, writing $\triangle_F$ for the number of triangles in $F$,
\[
t_{p_n} (F, G_n) \to e^{-\triangle_F} \qquad \emph{as } n \to \infty.
\]
Moreover, there is no kernel $W$ satisfying $t(F, W) = e^{-\triangle_F}$ for all graphs $F$.
\end{theorem}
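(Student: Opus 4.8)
The plan is to split the theorem into a probabilistic construction (producing the $G_n$) and a soft analytic obstruction (the nonexistence of $W$), and I would dispatch the second part first since it is the clean one. Suppose for contradiction that a kernel $W\colon[0,1]^2\to[0,\infty)$ satisfies $t(F,W)=e^{-\triangle_F}$ for every $F$. Since $K_2$ and $C_4$ contain no triangle, this forces $t(K_2,W)=1$ and $t(C_4,W)=1$, while $t(K_3,W)=e^{-1}$. Writing $d(x)=\int_0^1 W(x,y)\,dy$ and $A(x,y)=\int_0^1 W(x,z)W(z,y)\,dz$ for the degree and codegree kernels, two applications of Cauchy--Schwarz give
\[
t(C_4,W)=\int_{[0,1]^2}A^2\ \ge\ \Big(\int_{[0,1]^2}A\Big)^2=\Big(\int_0^1 d^2\Big)^2\ \ge\ \Big(\int_0^1 d\Big)^4=t(K_2,W)^4=1.
\]
Equality throughout, which is forced by $t(C_4,W)=1$, makes both $d$ and $A$ constant almost everywhere, and the common value of $A$ must be $\int A=\int d^2=1$. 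But then $t(K_3,W)=\int_{[0,1]^2}W(x,y)A(x,y)\,dx\,dy=\int_{[0,1]^2}W=1\neq e^{-1}$, a contradiction. This is precisely the rigidity that Conjecture~3.21 asserts, and I expect this half to be short.

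The construction is the substantial half. I would produce $G_n$ at random on $N$ vertices with edge density $p=N^{-o(1)}$, compute $\mathbb{E}\,t_p(F,G_n)$ by summing over embeddings $\phi\colon V(F)\to[N]$, and then upgrade expectation to an almost-sure statement by concentration; since $p=N^{-o(1)}$ every count is polynomially large, so a bounded-difference (Azuma) or second-moment estimate should suffice, after which a diagonal argument extracts a single sequence. The design goal is a graph that looks perfectly pseudorandom on every triangle-free pattern but whose triangles are independently thinned by a factor $e^{-1}$: for a generic injective embedding the triangles of $F$ land on disjoint vertex-triples, so if the suppression is governed by independent triple-randomness their contributions multiply to $\prod_{T\subseteq F}e^{-1}=e^{-\triangle_F}$, while non-injective or triple-overlapping embeddings are lower order. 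The subtle point that makes this possible --- and that reconciles the construction with the obstruction above --- is that $t_p(C_4,G_n)\to1$ does \emph{not} pin down $t_p(K_3,G_n)$ in the sparse normalization: the normalized adjacency kernel $U_n$ of $G_n$ (equal to $1/p$ on edges and $0$ elsewhere) has $\norm{U_n}_2^2=1/p\to\infty$, so a codegree variance of order only $p$ shifts $t_p(K_3,G_n)$ by a constant while leaving $t_p(C_4,G_n)\to1$. Concretely this asks that edges sit preferentially on pairs of slightly below-average codegree.

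The main obstacle is building this decoupling honestly. A naive edge-deletion scheme cannot work: in a pseudorandom host each edge lies in about $p^2N$ triangles, so removing a constant fraction of the triangles forces the removal of a constant fraction of the edges, perturbing the very edge and $C_4$ statistics one wishes to preserve. The construction must therefore suppress triangles through correlations invisible to even (in particular bipartite) subgraph counts, and these correlations cannot factor through vertex labels, for otherwise $G_n$ would be an inhomogeneous random graph whose limit is a kernel --- exactly what the second half forbids. Verifying that the chosen correlated model yields \emph{exactly} $e^{-\triangle_F}$ for all $F$ simultaneously, with every competing non-injective, overlapping, and higher-order embedding contribution controlled and with adequate concentration, is where the real work lies.
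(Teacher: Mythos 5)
Your second half --- the nonexistence of the kernel --- is correct and is essentially the paper's argument: two applications of Cauchy--Schwarz show $t(C_4,W) \ge t(K_2,W)^4$ with equality forcing the codegree function to be constant, whence $t(K_3,W) = t(K_2,W) = 1 \ne e^{-1}$. (The paper simply cites \cite[Claim 11.63]{Lov12} for this rigidity statement, noting that boundedness of $W$ is not needed.)

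The first half, however, contains a genuine gap: you never construct the graphs. You describe design goals for a correlated random model (triangles thinned by $e^{-1}$, correlations invisible to bipartite counts, not factoring through vertex labels), correctly diagnose why naive edge-deletion and inhomogeneous random graphs cannot work, and then explicitly defer the actual construction and its verification as ``where the real work lies.'' That deferred work is the entire content of the existence claim. The paper's solution is strikingly different from what you are reaching for: it is a completely explicit, deterministic construction, $G_n = K_n^{\otimes n^2}$, the $n^2$-th tensor power of $K_n$ (vertices are tuples in $[n]^{n^2}$, adjacent when they differ in every coordinate). Since $\hom(F, K_n^{\otimes n^2}) = \hom(F,K_n)^{n^2}$ and $\hom(F,K_n)$ is the chromatic polynomial, whose inclusion--exclusion expansion $n^{|F|} - e_F n^{|F|-1} + (\binom{e_F}{2} - \triangle_F) n^{|F|-2} + O_F(n^{|F|-3})$ has the triangle count appearing exactly in the third coefficient, one gets $t_{p_n}(F,G_n) = (1 - \triangle_F n^{-2} + O_F(n^{-3}))^{n^2} \to e^{-\triangle_F}$ by a few lines of algebra. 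No randomness, no concentration, no case analysis of overlapping embeddings. If you want to salvage your probabilistic route you would need to exhibit a concrete correlated model and carry out the full moment computation for every $F$; the tensor-power construction shows this can be avoided entirely.
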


\begin{proof}
Let $G = G_n = K_n^{\otimes n^2}$, the $n^2$-th tensor power of $K_n$. Explicitly, this graph has vertex set $[n]^{n^2}$, with two tuples adjacent precisely when they differ in every coordinate.
Its edge density is $p = p_n = (1-n^{-1})^{n^2} = (e^{-1/2}+o(1))e^{-n}$.
Note that $\hom(F, K_n)$ counts proper $n$-colorings of $F$. It is a standard result in graph theory (easily proved using inclusion-exclusion) that
\[
\hom(F, K_n) 
= 
n^{|F|} - e_F n^{|F|-1} + \left(\binom{e_F}{2}-\triangle_F \right) n^{|F|-2}+O_F(n^{|F|-3}).
\]
Since $\hom(F, K_n^{\otimes n^2}) = \hom(F, K_n)^{n^2}$, 
\begin{align*}
t_p(F, G) &= p^{-e_F} |G|^{-|F|}  \hom (F, K_n)^{n^2} \\
&=  (1-n^{-1})^{-e_Fn^2} 
 \left(1-e_F n^{-1} + \left(\binom{e_F}{2}  -\triangle_F\right) n^{-2}+O_F(n^{-3})\right)^{n^2}\\
& = (1-n^{-1})^{-e_Fn^2} (1-n^{-1})^{e_Fn^2}\left(1-\triangle_Fn^{-2}+O_F(n^{-3})\right)^{n^2}\\
&= \left(1- \triangle_F n^{-2} +O_F(n^{-3}) \right)^{n^2} \\
&\to e^{-\triangle_F} \qquad \text{as } n \to \infty.
\end{align*}

Finally, the standard proof of the equivalence of quasirandomness for dense graphs shows that if a kernel $W$ satisfies $t(K_2, W) = t(C_4, W) = 1$, then $W = 1$ almost everywhere (see \cite[Claim 11.63]{Lov12}, whose proof does not require $W$ to be bounded), and hence $t(F, W) = 1$ for all graphs $F$. Thus there is no $W$ satisfying $t(F, W) = e^{-\triangle_F}$ for all graphs $F$.
\end{proof}

\begin{remark}
After normalizing by dividing by the edge density, the kernels corresponding
to $G_n$ converge in cut norm to the constant kernel.
This is a result of the following lemma applied with $W_n$ being the associated graphon of $G_n$ divided by $p_n$. 
As a consequence (see \cite[Lemma 4.2]{BR09}), the graph sequence satisfies the bounded density assumption \cite[Assumption 4.1]{BR09} (also known under the names ``no dense spots''~\cite{Koh97}  and ``$L^\infty$ upper regular''~\cite{BCCZ18,BCCZ19}).

One can obtain a sequence of graphs with similar properties and $|G_n| = n$ by slowly blowing-up the above construction (see \cite[Remark 3.14]{BR09}).
\end{remark}

Recall the cut norm of $U \colon [0,1]^2 \to \RR$ is defined by $\norm{U}_\square = \sup_{A,B \subseteq [0,1]} \abs{\int_{A \times B} U}$.

\begin{lemma}
If a sequence $W_n$ of kernels satisfies $t(F, W_n) \to 1$ whenever $F$ is a subgraph of $C_4$, then $\norm{W_n - 1}_\square \to 0$. 
\end{lemma}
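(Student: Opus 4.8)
The plan is to reduce the statement to the standard quasirandomness inequality bounding the cut norm by the $C_4$-density, applied to the signed kernel $U_n = W_n - 1$. The main tool I would establish is that for any symmetric kernel $U$ whose $C_4$-integral is finite,
\[
\norm{U}_\square^4 \le \int_{[0,1]^4} U(x_1,x_2)\,U(x_2,x_3)\,U(x_3,x_4)\,U(x_4,x_1)\,dx_1\,dx_2\,dx_3\,dx_4 .
\]
I would prove this by two applications of Cauchy--Schwarz, starting from the definition $\norm{U}_\square = \sup_{A,B\subseteq[0,1]}\abs{\int_{A\times B} U}$: a first Cauchy--Schwarz (in the variable ranging over $B$) gives $\abs{\int_{A\times B}U}^2 \le \int_{[0,1]}\paren{\int_A U(x,y)\,dx}^2 dy$; expanding the square and integrating out $y$ produces $\int_{A\times A} K$ with $K(x,x') = \int_{[0,1]} U(x,y)U(x',y)\,dy$, and a second Cauchy--Schwarz bounds this by $\paren{\int_{[0,1]^2} K^2}^{1/2}$, which is exactly the square root of the right-hand integral. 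Positivity of $U$ is nowhere needed, so the bound applies to the signed kernel $U_n$.

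Next I would expand the right-hand side by multilinearity. Writing $U_n = W_n - 1$ and expanding the product over the four edges of $C_4$,
\[
\int_{[0,1]^4}\prod_{ij \in E(C_4)}\paren{W_n(x_i,x_j)-1} = \sum_{S \subseteq E(C_4)} (-1)^{4-\abs{S}}\, t(F_S, W_n),
\]
where $F_S$ denotes the subgraph of $C_4$ with edge set $S$; any isolated vertices integrate out to a factor of $1$, so each term is the density of a genuine subgraph of $C_4$ (the empty graph, $K_2$, the two-edge path $P_3$, the matching $2K_2$, the three-edge path $P_4$, or $C_4$ itself). Since every $F_S$ is a subgraph of $C_4$, the hypothesis gives $t(F_S, W_n)\to 1$ for each $S$, so the whole sum tends to $\sum_{S\subseteq E(C_4)}(-1)^{4-\abs{S}} = (1-1)^4 = 0$. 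Combining this limit with the displayed inequality yields $\norm{W_n-1}_\square^4 \to 0$, which is the claim.

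I expect the only genuine obstacle to be the first inequality, and even there the difficulty is technical rather than conceptual: one must verify that the Cauchy--Schwarz manipulations are valid for a signed and possibly unbounded $U_n$. This is ensured by the hypothesis, since $t(C_4, W_n)\to 1$ forces every integral in the expansion to be finite, after which the two Cauchy--Schwarz steps go through verbatim. (One could instead argue spectrally, using $\norm{U}_\square \le \norm{T_U}_{\mathrm{op}}$ and $\norm{T_U}_{\mathrm{op}}^4 \le \operatorname{tr}(T_U^4)$ for the integral operator $T_U$, but this would require $U_n \in L^2$, an assumption the Cauchy--Schwarz argument sidesteps.)
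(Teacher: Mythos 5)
Your proposal is correct and follows essentially the same route as the paper: two applications of Cauchy--Schwarz give $\norm{W_n-1}_\square^4 \le t(C_4, W_n-1)$, and expanding by multilinearity expresses $t(C_4,W_n-1)$ as an alternating sum of densities of subgraphs of $C_4$, which tends to $(1-1)^4=0$ under the hypothesis. The only differences are cosmetic (you prove the Cauchy--Schwarz inequality rather than citing it, and you label the two- and three-edge paths differently than the paper's $K_{2,1}$ and $P_3$).
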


\begin{proof}
Applying Cauchy--Schwarz twice (e.g., \cite[Lemma 8.12]{Lov12}) and expanding,
\begin{align*}
\norm{W_n - 1}_\square^4 
&\le t(C_4, W_n - 1)\\
&= t(C_4, W_n) - 4 t(P_3, W_n) + 4 t(K_{2,1}, W_n) + 2 t(K_2, W_n)^2 - 4 t(K_2, W_n) + 1\\
&\to 0. \qedhere
\end{align*}
\end{proof}

Our counterexample illustrates a fundamental difficulty with counting in sparse graphs, and suggests that additional hypotheses, such as those in \cite{CFZ14,CFZ15}, may indeed be necessary. 

We close by offering a new conjecture. We say that a set $\mathcal{S}$ of graphs is \emph{sparse forcing} if given a 
sequence of graphs $G_n$ with $|G_n| \to \infty$ and edge density $p_n = |G_n|^{-o(1)}$ such that the limit $c_F = \lim_{n\to\infty} t_{p_n} (F, G_n)$ exists for every graph $F$ and satisfies $\sup_F c_F^{1/e_F} < \infty$, and provided $c_F = 1$ for all $F \in \mathcal{S}$, one necessarily has $c_F = 1$ for all graphs $F$. In other words, having quasirandom density of graphs in $\mathcal{S}$ forces quasirandom density of all graphs. Our counterexample above shows that no set of triangle-free graphs can be sparse forcing. 
On the other hand, in the dense setting, i.e., for constant $p_n$, $\{K_2, C_4\}$ is forcing, and a well-known conjecture~\cite{CGW89} says that $\{K_2, H\}$ is forcing whenever $H$ is a bipartite graph with at least one cycle.

\begin{conjecture}
No finite set of graphs $\mathcal{S}$ can be sparse forcing.
\end{conjecture}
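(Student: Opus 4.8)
The plan is to generalize the counterexample so that it detects arbitrarily large subgraphs rather than only triangles. The starting point is an observation implicit in the proof of the theorem: normalized densities are \emph{multiplicative under tensor powers}. If $B = B_n$ is any base graph of edge density $q = q_n$, then $\hom(F, B^{\otimes m}) = \hom(F,B)^m$ and $|B^{\otimes m}| = |B|^m$ give
\[
t_{q^m}(F, B^{\otimes m}) = t_q(F, B)^m \qquad \text{for every } F.
\]
Hence, if one can find base graphs $B_n$ of density $q_n = |B_n|^{-o(1)}$ and an integer rate $m_n$ so that $t_{q_n}(F, B_n) = 1 - \mu(F)/m_n + o(1/m_n)$ for a fixed functional $\mu$ (with the $o(1/m_n)$ uniform in $n$ for each fixed $F$), then $G_n = B_n^{\otimes m_n}$ satisfies $c_F = e^{-\mu(F)}$, and $p_n = q_n^{m_n} = |G_n|^{-o(1)}$ as required. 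The theorem is exactly this recipe with $B_n = K_n$, $m_n = n^2$, and $\mu = \triangle$, since the displayed computation shows $t_{q_n}(F, K_n) = 1 - \triangle_F n^{-2} + O_F(n^{-3})$. Tensoring finitely many such constructions (at a common index $n$ with matched rates) multiplies the densities and hence \emph{adds} the functionals, while adjusting a single rate rescales $\mu$; so the achievable functionals are closed under addition and positive scaling, and it suffices to produce suitable generators.

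Given a finite set $\mathcal{S}$, put $v = \max_{F \in \mathcal{S}} \abs{F}$. The key reduction is that it suffices to realize, for this one value of $v$, a single nonzero functional $\mu$ that vanishes on every graph of order at most $v$: then $c_F = e^{-\mu(F)} = 1$ for all $F \in \mathcal{S}$, while $c_{F_0} \neq 1$ for some larger $F_0$, so $\mathcal{S}$ is not sparse forcing. The natural target is a clique-count functional $\mu = N_{K_{v+1}}$, the number of copies of $K_{v+1}$, which vanishes on all graphs with at most $v$ vertices and is positive on $K_{v+1}$; this directly generalizes the role of $N_{K_3} = \triangle$ in the theorem. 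The problem thus reduces to constructing base graphs $B_n$ whose normalized densities $t_{q_n}(F, B_n) \to 1$ for \emph{every} $F$ at a single common rate $1/m_n$, with leading defect proportional to $N_{K_{v+1}}(F)$ — that is, base graphs with enough higher-order pseudorandomness that all smaller subgraph densities match the random prediction to the relevant order, the first deviation being governed by $(v+1)$-cliques.

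The hard part is exactly this construction. Note first that, because the target $N_{K_{v+1}}$ is nonnegative, the boundedness hypothesis $\sup_F c_F^{1/e_F} < \infty$ is automatic, as $\mu \geq 0$ forces $c_F \leq 1$; so the entire difficulty concentrates in the simultaneous control of every subgraph density. For $B_n = K_n$ the defect already appears at the three-vertex scale, and at each higher order the correction to $t_{q_n}(F, K_n)$ mixes many subgraphs at once (trees, $C_4$, $K_4$, and so on), so one cannot reuse $K_n$ and must instead engineer bases with prescribed high-order pseudorandomness. Certifying that no small subgraph contributes an unwanted surviving term, for a single explicit algebraic or random base, is precisely the kind of ``sparse counting lemma'' control that the paper emphasizes is missing, and this is where I expect the main obstacle to lie: one needs a base whose normalized $F$-densities are simultaneously pinned to $1$ up to a clean single-subgraph correction, a demand that no currently available pseudorandomness hypothesis is known to supply.
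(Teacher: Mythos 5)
The statement you are trying to prove is the paper's closing \emph{conjecture}; the authors offer no proof of it, and their theorem only establishes the special case that no set of triangle-free graphs can be sparse forcing. So the relevant question is whether your argument closes the gap between that special case and the full conjecture, and it does not. Your reductions are sound as far as they go: the multiplicativity $t_{q^m}(F, B^{\otimes m}) = t_q(F,B)^m$ is correct (and the edge density of $B^{\otimes m}$ is indeed $q^m$ for simple $B$), the observation that a nonnegative exponent functional $\mu$ makes the boundedness condition $\sup_F c_F^{1/e_F} \le 1$ automatic is correct, and the reduction of the conjecture to realizing, for each $v$, some nonzero $\mu$ vanishing on all graphs with at most $v$ vertices (e.g.\ $\mu = N_{K_{v+1}}$) is a legitimate strategy. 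But the entire content of the conjecture is then concentrated in the step you defer: producing base graphs $B_n$ of density $|B_n|^{-o(1)}$ with $t_{q_n}(F,B_n) = 1 - N_{K_{v+1}}(F)\, m_n^{-1} + o(m_n^{-1})$ simultaneously for all $F$. You acknowledge this yourself (``this is where I expect the main obstacle to lie''), so what you have is a proposed reduction, not a proof.

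To see concretely why the missing step is not routine: for $B_n = K_n$ the expansion of $\hom(F,K_n)$ forces the first deviation from the random prediction to occur at order $n^{-2}$ with coefficient $-\triangle_F$, so $K_n$ can never isolate $K_{v+1}$ for $v \ge 3$ --- the triangle defect always appears first. Any replacement base must have \emph{all} subgraph densities on at most $v$ vertices agreeing with the quasirandom value to an accuracy finer than the reciprocal of the tensoring rate, which is a higher-order pseudorandomness requirement with no known explicit or algebraic realization at density $|B_n|^{-o(1)}$; and the corrections at each order mix many subgraphs (trees, $C_4$, smaller cliques), so one must verify cancellation of every such term, which is precisely the sparse counting control the paper identifies as unavailable. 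Until such a base graph is exhibited and its full density expansion verified, the argument does not establish the conjecture.
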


\subsection*{Acknowledgments}
We thank the anonymous referee for helpful comments regarding the presentation.


\end{document}